\titleformat*{\section}{\large\bfseries}
\titleformat*{\subsection}{\bfseries}
\theoremstyle{plain}
\newtheorem{theorem}{Theorem}[section]
\newtheorem{proposition}[theorem]{Proposition}
\newtheorem{lemma}[theorem]{Lemma}
\newtheorem{corollary}[theorem]{Corollary}
\newtheorem{remark}[theorem]{Remark}
\newtheorem{definition}[theorem]{Definition}
\newtheorem*{theorem*}{Theorem}
\let\d\delta
\let\eps\varepsilon
\let\n\noindent
\newcommand{\ov}{   \overline{\Pi}(\delta)   }
\newcommand{\var}{ \textup{Var}     }
\let\b\begin
\let\e\end
\let\f\frac
\let\bb\mathbb
\let\l\left
\let\r\right
\date{}
\title{\normalsize{\textbf{FRACTAL-DIMENSIONAL~PROPERTIES~OF~SUBORDINATORS}}}
\author{\normalsize{ADAM BARKER}}   
\begin{document}
\renewcommand{\baselinestretch}{1.5}

\maketitle

\paragraph{Abstract}  This work looks at the box-counting dimension of sets related to subordinators (non-decreasing L\'evy processes).  It was recently shown in \cite{s14} that  almost surely $\lim_{\d\rightarrow0}U(\d)N(t,\d) = t$,  where $N(t,\d)$ is the minimal number of boxes of size at most \( \delta\)  needed to cover a subordinator's range up to time $t$, and $U(\d)$ is the subordinator's renewal function. Our main result is a central limit theorem (CLT) for $N(t,\d)$, complementing and refining work in \cite{s14}. 

Box-counting dimension   is defined in terms of $N(t,\d)$, but for subordinators we prove that it can also be defined  using  a new   process  obtained by  shortening the original subordinator's  jumps of size greater than \(\delta\). This new process can be manipulated with remarkable ease in comparison to $N(t,\d)$, and allows better understanding of the box-counting dimension of a subordinator's range in terms of its L\'evy measure, improving upon  \cite[Corollary 1]{s14}.
Further, we shall prove corresponding CLT and almost sure convergence results for the new process.

\vspace{-0.25cm}

\section{Introduction \& Background} \vspace{-0.25cm}
We shall mostly study the minimal number, \(N(t,\delta)\), of intervals of length at most \( \delta\)  needed to cover the range $\{   X_s : 0\leq s \leq t\}$ of a subordinator $(X_s)_{s\geq0}$.  
The main result in this paper is a central limit theorem for \(N(t,\d)\), complementing the almost sure convergence result $ \lim_{\delta \rightarrow 0}U(\d)N(\delta,t)=t$, almost surely, where $U(\d)$ denotes the renewal function of the subordinator, see \cite[Theorem 1.1]{s14}.

Prior to the results in \cite{s14}, most works on box-counting dimension focused only on finding the value of $ \lim_{\delta \rightarrow 0} \log(N(t,\delta)) /  \log(1/\delta)    $, which defines the box-counting dimension. However, working with $N(t,\d)$ itself allows precise understanding of its fluctuations around its mean,  inaccessible at the log scale.

We will introduce an alternative ``box-counting scheme'' to \(N(t,\delta)\), which  allows us to   understand the dimension of the range in terms of the L\'evy measure, complementing results formulated in terms of the renewal function. 

The fractal dimensional study of sets such as the range or graph of L\'evy processes, and especially subordinators, has a very rich history. There are many works which study the  box-counting, Hausdorff,  and packing dimensions  of sets related to L\'evy processes  \cite{bg60,cps14,f04,fs98,ft92,h78,k09,ksx12,kx05,kx08,kx16,s14,sw82,x04}.




\n A L\'evy process is a stochastic process in $\bb{R}^d$ which has stationary, independent increments, and starts at the origin.  A subordinator $X:=(X_t)_{t\geq0}$ is a non-decreasing real-valued L\'evy process.  
The Laplace exponent $\Phi$ of a subordinator $X$ is defined by the relation $ e^{-\Phi(\lambda)} =   \mathbb{E}[  e^{- \lambda X_1}  ] $  for $\lambda\geq0$. By the   L\'evy Khintchine formula \cite[p72]{b98},   $\Phi$ can always be expressed as  
 \begin{equation}
 \label{lk}   \Phi(\lambda) = \ \text{d} \lambda +  \int_0^\infty  (1- e^{-\lambda x} ) \Pi(dx) , 
 \end{equation}
 where d is the linear drift, and $\Pi$ is the L\'evy measure, which determines the size and intensity of the jumps (discontinuities) of $X$, and satisfies the condition $\int_0^\infty (1\wedge x)\Pi(dx)<\infty$. 
The renewal function is the expected first passage time above $\d$,  \( U(\delta):=\bb{E}[T_\d]\),  where $T_\d:= \int_0^\infty \mathbbm{1}_{ \{  X_t \leq \delta  \} } dt $. 

If the L\'evy measure is infinite, then infinitesimally small jumps occur at an infinite rate, almost surely. We will not study processes with finite L\'evy measure, as they have only finitely many jumps, and hence no fractal structure.

The box-counting dimension of a set in $\bb{R}^d$ is  $  \lim_{\delta \rightarrow 0} \log(N(\delta)) /  \log(1/\delta)    $, where $N(\delta)$ is the minimal number of $d$-dimensional boxes of side length $\delta$ needed to cover the set. 
The limsup and liminf  respectively define  the upper and lower box-counting dimensions. For further background reading, we refer to \cite{b98,b99} for  subordinators,  \cite{d07,kyp06,s99} for L\'evy processes, and\cite{f04,x04} for fractals.
 
The paper is structured as follows: Section \mbox{\ref{mainresults}} outlines the statements of all of the main results; Section \mbox{\ref{ncltproof}}   contains the proof of the CLT result for $N(t,\d)$ and the lemmas required for this proof; Section \mbox{\ref{Lproofs}} contains the proofs of all of the main results on the new process $L(t,\d)$; Section \mbox{\ref{ext}}  extends this work to the graph of a subordinator, and considers the special case of a subordinator with regularly varying Laplace exponent.
 
\vspace{-0.37cm}
\section{Main Results} \label{mainresults}\vspace{-0.32cm}
 \subsection{A Central Limit Theorem for $N(t,\delta)$ } 
\vspace{-0.3cm}
   Expanding upon Bertoin's result \cite[Theorem 5.1]{b99}, the following almost sure limiting behaviour of $N(t,\d)$ was determined by Savov  \cite[Theorem 1.1]{s14}. 
   \b{theorem}[Savov, 2014] \label{nslln} If a subordinator   has infinite L\'evy measure or a non-zero drift, then for all $t>0$, \(    \lim_{\d\rightarrow0+}   U(\delta) N(t,\delta) =   t \)    almost surely.
   \e{theorem} 
\n We will  complement and refine this work with a CLT on $N(t,\d)$. When the subordinator has no drift, we require a mild condition on the L\'evy measure:  
 \b{equation} \label{intcond}   \liminf_{\d\rightarrow0} \f{I(2\d)  }{ I(\d)  } >1,
 \e{equation}
where $I(u):=\int_0^u \overline{\Pi}(x)dx$, and $\overline{\Pi}(x):=\Pi((x,\infty))$. 
 \begin{remark} \textup{Condition $\l(\ref{intcond}\r)$  has many equivalent formulations, see   \cite[Ex.\ III.7]{b98} and \cite[Section 2.1]{bgt89}. We emphasise that $\l(\ref{intcond}\r)$ is far  less restrictive than regular variation (or even   $\mathcal{O}$-regular variation) of the Laplace exponent, and appears naturally in the context of   the law of the iterated logarithm (see e.g.\ \cite[p87]{b98}).}\end{remark}
 \b{theorem} \label{nclt}  For every driftless subordinator with L\'evy measure satisfying $\l(\ref{intcond}\r)$,  for any $t>0$,  $N(t,\d)$ satisfies the following central limit theorem:
 \b{equation} \label{thm}     \f{  N(t,\delta)  - t a(\delta)  }{  t^\f{1}{2} b(\delta)  } \overset{d}\rightarrow  \mathcal{N}(0,1),
  \e{equation}  as $ \delta \rightarrow 0$, where $a(\delta) := U(\delta)^{-1}$, and $b(\delta) :=  U(\delta)^{-\f{3}{2}} \textup{Var}(T_\delta)^\f{1}{2} $.   \e{theorem}


\subsection{An Alternative Box-Counting Scheme, $L(t,\delta)$ }


   \begin{definition} \label{shorteneddefn} \label{subord} The process of $\delta$-shortened jumps, $\tilde{X}^\delta:=(\tilde{X}_t^\delta)_{t\geq0}$, is obtained by shortening all jumps of $X$ of size larger than $\delta$  to instead have size  $\delta$.  That is,   $\tilde{X}^\delta$ is  the subordinator with Laplace exponent $\tilde{\Phi}^\d(u)= \textup{d}u + \int_0^\d (1-e^{-ux})\tilde{\Pi}^\d (dx)$ and  L\'evy measure   \(\tilde{\Pi}^\delta (dx) = \Pi(dx)\mathbbm{1}_{ \{  x<\delta  \}} + \overline{\Pi}(\delta)  \Delta_\delta \), where $ \Delta_\delta$ denotes a unit point mass at $\delta$, and $\Pi$ is the  L\'evy measure of $X$.     \end{definition}  
   
         \begin{definition} For $\d,t>0$, $L(t,\d)$ is defined by   \(L(t,\delta) := \f{1}{\d} \tilde{X}_t^\delta    \).  \end{definition}

    \n  We will see in Theorem \mbox{\ref{asymp}} that $L(t,\d)$ can replace $N(t,\d)$ in the definition $  \lim_{\delta \rightarrow 0} \log(N(t,\delta)) /  \log(1/\delta)    $  of the box-counting dimension of the range of $X$. Then we will prove almost sure convergence and CLT  results for $L(t,\delta)$.


     \begin{remark} \textup{ \label{freedom} The log scale at which box-counting dimension is defined allows flexibility  among functions to be taken in place of the optimal count. In particular, there is freedom between functions related by $\label{freedom} f \asymp g $ asymptotically, where the notation means that there exist positive constants $A,B$ such that $A f(x) \leq g(x) \leq B f(x)$ for all $x$.     For more details, we refer to \cite[p42]{f04}. 
}\end{remark}

\begin{theorem} \label{asymp} For all $\d,t>0$, for every subordinator,  $N(t,\delta) \asymp L(t,\delta) $. In particular, by Remark \mbox{\ref{freedom}},   $L(t,\delta)$ can be used to define the box-counting dimension of the range, i.e.\ $  \lim_{\delta \rightarrow 0} \log(N(t,\delta)) /  \log(1/\delta) =  \lim_{\delta \rightarrow 0} \log(L(t,\delta)) /  \log(1/\delta)    $. \end{theorem}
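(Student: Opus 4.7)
The plan is to compare $N(t,\delta)$ and $L(t,\delta)$ piece-by-piece by decomposing the range of $X$ on $[0,t]$ along the ``big'' jumps, those of size exceeding $\delta$. Let $s_1<\cdots<s_n$ denote the times of such jumps in $(0,t]$, set $s_0=0$ and $s_{n+1}=t$, and consider the pieces $R_i:=\{X_s:s\in[s_i,s_{i+1})\}$ for $i<n$, together with $R_n:=\{X_s:s\in[s_n,t]\}$. Each $R_i$ is contained in an interval of length $d_i:=X_{s_{i+1}-}-X_{s_i}$, and a telescoping sum gives $\sum_{i=0}^n d_i=X_t-\sum_{i=1}^n\Delta X_{s_i}=\textup{d}t+\sum_{0<s\le t,\,\Delta X_s\le\delta}\Delta X_s$. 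Comparing with Definition~\ref{shorteneddefn}, this equals $\tilde{X}_t^\delta-\delta n$, so that
\[ L(t,\delta)=\sum_{i=0}^n\frac{d_i}{\delta}+n. \]

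The upper bound is immediate: each $R_i$ can be covered by $\lceil d_i/\delta\rceil\le d_i/\delta+1$ intervals of length $\delta$, and summing over $i$ gives $N(t,\delta)\le L(t,\delta)+1$.

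The matching lower bound $N(t,\delta)\ge(L(t,\delta)+1)/2$ reduces to showing $M_i\ge(d_i/\delta+1)/2$, where $M_i$ is the minimum number of intervals of length at most $\delta$ covering $R_i$. The crucial observation is that the connected components of $[X_{s_i},X_{s_{i+1}-}]\setminus\overline{R_i}$ are exactly the open intervals $(X_{\tau-},X_\tau)$ for small-jump times $\tau\in(s_i,s_{i+1})$, each of length at most $\delta$. Given a cover by closed intervals $J_1,\ldots,J_{M_i}$, let $a_k,b_k$ be the extremes of $\overline{R_i}\cap J_k$ and form $U:=\bigcup_k[a_k,b_k]$. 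Since $\overline{R_i}\subseteq U$ with $\min U=X_{s_i}$ and $\max U=X_{s_{i+1}-}$, writing $K\le M_i$ for the number of connected components of $U$, the complement $[X_{s_i},X_{s_{i+1}-}]\setminus U$ consists of $K-1$ open intervals each contained in a gap of $\overline{R_i}$ and thus of length at most $\delta$. Since the total length of $U$ is at most $\sum_k|J_k|\le M_i\delta$, this yields $d_i\le M_i\delta+(K-1)\delta\le(2M_i-1)\delta$, i.e.\ $M_i\ge(d_i/\delta+1)/2$; summing over $i$ gives the lower bound.

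Combining, $(L(t,\delta)+1)/2\le N(t,\delta)\le L(t,\delta)+1$, so $N(t,\delta)\asymp L(t,\delta)$, and the equality of box-counting dimensions follows from Remark~\ref{freedom}. The main obstacle is the gap estimate in the lower bound: only by decomposing along big jumps can one ensure that every residual gap in $R_i$ arises from a single small jump of size at most $\delta$, and without this piecewise viewpoint one obtains only much coarser control over the gaps between consecutive covering intervals.
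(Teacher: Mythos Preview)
Your proof is correct and takes a genuinely different route from the paper's. The paper argues indirectly: it passes through two auxiliary processes---$X^{(0,\delta)}$ (big jumps deleted) and the big-jump counter $Y_t^\delta$---together with the mesh count $M$, and then invokes the standard equivalence $M\asymp N$ from Falconer to conclude. Your argument is more direct and self-contained: you split the range at the big-jump times, observe that the pieces $R_i$ are separated by gaps of length greater than $\delta$ so that $N(t,\delta)\ge\sum_i M_i$, and then bound each $M_i$ from below by the clean geometric argument that the $K-1$ gaps of $U$ inside $[X_{s_i},X_{s_{i+1}-}]$ each lie in a single small-jump interval and hence have length at most $\delta$. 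This yields the explicit sandwich $(L(t,\delta)+1)/2\le N(t,\delta)\le L(t,\delta)+1$, with concrete constants the paper does not provide. The paper's approach, by contrast, makes the role of the mesh count and the processes $X^{(0,\delta)}$, $Y_t^\delta$ more visible, which connects naturally to the later calculations involving $\tilde X^\delta$. Both approaches hinge on the same key observation you emphasise at the end: only after isolating the big jumps are the residual gaps guaranteed to have length at most $\delta$.
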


\begin{theorem} \label{lslln}  For every subordinator with  infinite L\'evy measure, for all $ t >0$,  \begin{equation}   \lim_{\d\rightarrow0} \frac{ L(t,\delta)}{ \mu(\delta)} =   t ,  \end{equation} almost surely, where
$\mu(\delta) :=  \frac{1}{\delta} ( \textup{d} + I(\delta) ) $, and $I(\d)= \int_0^\d \overline{\Pi}(y)dy $.
\end{theorem}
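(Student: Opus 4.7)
The plan is to rewrite the target as $\tilde{X}_t^\delta/(d+I(\delta))\to t$ almost surely, since by definition $L(t,\delta)/\mu(\delta)=\tilde{X}_t^\delta/(d+I(\delta))$, and then combine a moment estimate for $\tilde{X}_t^\delta$ with a monotonicity-based subsequence argument.

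First, I would compute the first two moments of $\tilde{X}_t^\delta$ by differentiating the Laplace exponent $\tilde{\Phi}^\delta$ at zero, using Fubini to rewrite $\int_0^\delta x^k \Pi(dx)$ in terms of $\overline{\Pi}$. This yields $\mathbb{E}[\tilde{X}_t^\delta]=t(d+I(\delta))$, matching the prescribed limit, and $\var(\tilde{X}_t^\delta)=2t\int_0^\delta x\overline{\Pi}(x)\,dx \le 2t\delta I(\delta)$. Chebyshev then gives $\Pr(|\tilde{X}_t^\delta/(d+I(\delta))-t|>\eta)\le 2t\delta/[\eta^2(d+I(\delta))]$, which vanishes as $\delta\to 0$: if $d>0$ the bound is $O(\delta)$, and if $d=0$ then the assumption that $\Pi$ is infinite forces $\overline{\Pi}(\delta)\to\infty$ and hence $I(\delta)/\delta\to\infty$. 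So $L(t,\delta)/\mu(\delta)\to t$ in probability.

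To upgrade to almost sure convergence along a subsequence $\delta_n\downarrow 0$, I would exploit the fact that all jumps of $\tilde{X}^\delta$ are bounded by $\delta$, so a standard Chernoff/Bennett estimate applied to the Laplace transform at $\lambda=\lambda^*/\delta$ with $\lambda^*$ sufficiently small yields an exponential tail of the form $\Pr(|\tilde{X}_t^\delta-t(d+I(\delta))|>\eta\,t(d+I(\delta)))\le 2e^{-c(\eta)\,t\,\mu(\delta)}$. Since $\mu$ is continuous with $\mu(\delta)\to\infty$, I can choose $\delta_n$ so that $\mu(\delta_n)$ grows at least polynomially in $n$, which makes the right-hand side summable, and Borel--Cantelli gives $\tilde{X}_t^{\delta_n}/(d+I(\delta_n))\to t$ almost surely.

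Finally, for $\delta\in[\delta_{n+1},\delta_n]$ I would interpolate using two monotonicities: pathwise, $\delta\mapsto\tilde{X}_t^\delta$ is non-decreasing (shortening fewer jumps produces a pointwise-larger process), and $\delta\mapsto d+I(\delta)$ is clearly non-decreasing, yielding the sandwich
\[
\frac{\tilde{X}_t^{\delta_{n+1}}}{d+I(\delta_n)}\;\le\;\frac{\tilde{X}_t^{\delta}}{d+I(\delta)}\;\le\;\frac{\tilde{X}_t^{\delta_n}}{d+I(\delta_{n+1})}.
\]
Both sides tend to $t$ as $n\to\infty$ provided $(d+I(\delta_n))/(d+I(\delta_{n+1}))\to 1$. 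The main obstacle I anticipate is designing $\{\delta_n\}$ so that the Borel--Cantelli summability and this slow-variation condition hold simultaneously. When $d>0$ this is automatic because $d+I(\delta_n)\to d>0$; in the driftless case one genuinely needs the strength of the exponential concentration (rather than mere Chebyshev) to afford a subsequence dense enough, chosen via the continuity of $\mu$, that the $I$-ratio tends to $1$. That balancing is the only delicate point; everything else reduces to the standard moment arithmetic above.
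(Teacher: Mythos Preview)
Your overall architecture matches the paper's: compute the first two moments of $\tilde X_t^\delta$, apply a concentration inequality plus Borel--Cantelli along a subsequence $(\delta_n)$, then fill in the gaps via a monotonicity sandwich. The execution, however, differs in two linked ways, and the paper's version is simpler.

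First, the paper needs nothing stronger than Chebyshev. It observes that $\mu(\delta)$ is continuous and increases to $\infty$ as $\delta\downarrow0$, so for any fixed $r\in(0,1)$ one may pick $\delta_n$ with $\mu(\delta_n)=r^{-n}$; the Chebyshev bound then gives $\sum_n \mathbb{P}(|L(t,\delta_n)/(t\mu(\delta_n))-1|>\varepsilon)\le (t\varepsilon^2)^{-1}\sum_n r^n<\infty$, and Borel--Cantelli yields almost sure convergence along $(\delta_n)$. Second, for the sandwich the paper uses the pair $L(t,\delta)$ and $\mu(\delta)$ (both non-decreasing as $\delta$ decreases) rather than your pair $\tilde X_t^\delta$ and $d+I(\delta)$. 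With the paper's choice, the ratio entering the sandwich is exactly $\mu(\delta_{n+1})/\mu(\delta_n)=1/r$, a \emph{fixed} constant, so one gets $rt\le\liminf\le\limsup\le t/r$ and finishes by letting $r\uparrow1$.

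This $r\to1$ device is precisely what dissolves the obstacle you flag as ``the only delicate point'': one never needs the ratio along the subsequence to tend to $1$, only to be bounded by a constant that is afterwards sent to $1$. Your detour through a Chernoff/Bennett bound is legitimate (the jumps of $\tilde X^\delta$ are bounded by $\delta$, so the exponential tail $e^{-c(\eta)t\mu(\delta)}$ is available), but it is extra machinery introduced solely to manufacture a subsequence dense enough that $(d+I(\delta_n))/(d+I(\delta_{n+1}))\to1$, and you correctly note that arranging this simultaneously with summability is not automatic in the driftless case. The paper's geometric grid in $\mu$ together with the $r\to1$ trick sidesteps the issue entirely and keeps the proof at the level of second moments.
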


\begin{remark} \textup{ It can be deduced from \cite[Prop 1.4]{b99} that $U(\delta)^{-1} \asymp      \frac{1}{\delta} ( I(\delta) + \ \text{\textup{d}})$, for any subordinator.    Theorems \mbox{\ref{nslln}}, \mbox{\ref{asymp}} and \mbox{\ref{lslln}} allow us to understand this relationship in terms of geometric properties of subordinators.
}\end{remark}

\begin{theorem} \label{lclt} \hspace{-1.5mm}  For every subordinator with  infinite L\'evy measure, for all~$t>0$,   \begin{equation}   \frac{L(t , \delta) - t \mu(\delta)}{  \ t^{\f{1}{2}}   v(\delta) \f{}{} }    \overset{d}\rightarrow     \mathcal{N}(0,1)\end{equation} as $ \delta \rightarrow 0$, where  \( \mu(\delta) =  \frac{1}{\delta} ( \textup{d} + I(\delta) )  \), and \(  v(\delta):=   \f{1}{\d} \l[  \int_0^\infty  (x \wedge \d)^2 \Pi(dx)    \r]^{\frac{1}{2}}.\)
\end{theorem}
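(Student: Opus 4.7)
The plan is to exploit the fact that $L(t,\d)=\tilde X_t^\d/\d$ is, by construction, a rescaled infinitely divisible random variable whose L\'evy triplet is known explicitly from Definition \ref{shorteneddefn}, and to apply L\'evy's continuity theorem directly to its characteristic function. This is considerably cleaner than the corresponding argument for $N(t,\d)$, since $L(t,\d)$ inherits infinite divisibility directly from $\tilde X^\d$.

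First I would verify the claimed centering and scaling. Integration by parts on $(0,\d]$ yields $\int_0^\d x\,\Pi(dx)=I(\d)-\d\overline\Pi(\d)$, with vanishing boundary term at $0$ since $x\overline\Pi(x)\to 0$ as $x\to 0$ (a consequence of $\int_0^1 x\,\Pi(dx)<\infty$). The atom of mass $\overline\Pi(\d)$ placed at $\d$ in $\tilde\Pi^\d$ exactly restores the missing mass, so $\int_0^\d x\,\tilde\Pi^\d(dx)=I(\d)$, and hence $\mathbb{E}[L(t,\d)]=t\mu(\d)$. A directly analogous computation gives $\int_0^\d x^2\,\tilde\Pi^\d(dx)=\int_0^\infty(x\wedge\d)^2\Pi(dx)$, so $\var(L(t,\d))=t\,v(\d)^2$.

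Next, set $\alpha_\d:=t^{1/2}\d v(\d)$ and $Z_\d:=(L(t,\d)-t\mu(\d))/(t^{1/2}v(\d))$. The L\'evy--Khintchine formula for the centered exponent of $\tilde X_t^\d$ gives
\b{equation*}
\log\mathbb{E}\l[e^{i\theta Z_\d}\r]=t\int_0^\infty\l(e^{i\theta x/\alpha_\d}-1-\f{i\theta x}{\alpha_\d}\r)\tilde\Pi^\d(dx).
\e{equation*}
The critical observation is that $\tilde\Pi^\d$ is supported on $(0,\d]$, so $|\theta x/\alpha_\d|\leq|\theta|\d/\alpha_\d$ uniformly on its support. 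The crude estimate $\alpha_\d^2\geq t\d^2\overline\Pi(\d)$ together with $\overline\Pi(\d)\to\infty$ as $\d\to 0$ (the infinite L\'evy measure hypothesis) forces $\d/\alpha_\d\to 0$, so the Taylor argument is uniformly small for $\d$ sufficiently small.

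The conclusion then follows from the bound $|e^{iu}-1-iu+u^2/2|\leq|u|^3/6$ (valid for all real $u$): the main term evaluates to $-t\theta^2/(2\alpha_\d^2)\int x^2\,\tilde\Pi^\d(dx)=-\theta^2/2$, while the remainder is dominated by $t|\theta|^3\alpha_\d^{-3}\int x^3\,\tilde\Pi^\d(dx)\leq t|\theta|^3\d\alpha_\d^{-3}\int x^2\,\tilde\Pi^\d(dx)=|\theta|^3\d/\alpha_\d\to 0$. L\'evy's continuity theorem then delivers the desired convergence. No serious obstacle arises; the only care required is in verifying that the boundary term in the integration by parts vanishes at $0$, and in choosing the Taylor estimate so that the third-moment integrand can be dominated by $\d$ times the second-moment integrand. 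This is precisely what makes the error vanish under only the infinite L\'evy measure hypothesis, so no analogue of condition $(\ref{intcond})$ is required here.
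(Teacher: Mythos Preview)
Your proof is correct and follows essentially the same approach as the paper: both compute a transform of the centered, rescaled $L(t,\delta)$, Taylor-expand to second order, and control the cubic remainder using only $\overline{\Pi}(\delta)\to\infty$. The only minor differences are that the paper works with Laplace transforms rather than characteristic functions, and it splits the third-moment integral into its continuous and atomic parts before estimating each separately, whereas your direct bound $x^3\le\delta x^2$ on the support of $\tilde\Pi^\delta$ dispatches both at once.
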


\begin{remark} \label{meanvar}  \textup{   Applying Remark \mbox{\ref{subord}},  the L\'evy Khintchine formula $\l(\ref{lk}\r)$, and the fact that for any integrable function $f$,  \( \int_0^\d  f(x) \ \tilde{\Pi}^\delta (dx)  = \int_0^\infty  f(  x \wedge \delta)  \  \Pi(dx)\), it follows that  for all $\d,t>0$,  the mean and variance of $L(t,\d)$ are given by \[   \mathbb{E}[     L(t,\delta)] = t \mu(\d)   , \quad     \text{\textup{Var}}(   L(t,\delta) ) =   t   v(\delta) .\]  Computing the moments of $L(t,\d)$ is remarkably simple in comparison to the moments of $N(t,\d)$, which are not well known. This is a key benefit of  using $L(t,\d)$ to  study the box-counting dimension of the range  of a subordinator.   }\end{remark}

 \section{Proof of Theorem \mbox{\ref{nclt}}} \label{ncltproof}
 \subsection{A Sufficient Condition for Theorem \mbox{\ref{nclt}}} We will first work towards proving the following sufficient condition:
 \begin{lemma}  \label{suffcond}  For every subordinator with infinite L\'evy measure,  a sufficient condition for the convergence in distribution $\l(\ref{thm}\r)$, with $\sigma_\d^2:=\var(T_\d)$, is  \b{equation} \label{lemmm} \lim_{\d\rightarrow0}     \f{ U(\d)^\f{7}{3}     }{  \sigma_\d^2  } =0.
 \e{equation}
 \end{lemma}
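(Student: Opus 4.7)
The plan is to couple $N(t,\d)$ with a renewal counting process built from iid copies of $T_\d$ and then apply a Lyapunov-type CLT for triangular arrays. By the strong Markov property, define inductively $\tau_0 := 0$ and $\tau_k := \inf\{s \geq \tau_{k-1} : X_s - X_{\tau_{k-1}} > \d\}$; the strides $T_\d^{(k)} := \tau_k - \tau_{k-1}$ are then iid copies of $T_\d$. Set $\hat{N}(t,\d) := \sup\{k : \tau_k \leq t\}$ and $S_n := T_\d^{(1)} + \cdots + T_\d^{(n)}$. Two elementary observations --- that the $\hat{N}(t,\d)+1$ intervals $[X_{\tau_k}, X_{\tau_k} + \d]$ for $0 \leq k \leq \hat{N}(t,\d)$ cover the range of $X$ up to time $t$, and that consecutive points $X_{\tau_{k-1}}, X_{\tau_k}$ are separated by at least $\d$ --- give the essentially exact identity $N(t,\d) = \hat{N}(t,\d) + 1$ (with at most an $O(1)$ correction if $X$ has a drift).

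It therefore suffices to prove the CLT for $\hat{N}(t,\d)$. Using the renewal duality $\{\hat{N}(t,\d) \leq n\} = \{S_{n+1} > t\}$ with the choice $n_\d := \lfloor t a(\d) + z t^{1/2} b(\d) \rfloor$, a direct centering calculation (using $n_\d U(\d) = t + z t^{1/2} U(\d)^{-1/2}\sigma_\d + O(U(\d))$) converts the target CLT for $\hat{N}$ into the triangular-array CLT
\[ \f{S_{n_\d} - n_\d U(\d)}{n_\d^{1/2}\sigma_\d} \ \overset{d}\rightarrow\ \mathcal{N}(0,1) \]
as $\d \to 0$; here $n_\d \to \infty$ because $U(\d) \to 0$ under the infinite L\'evy measure hypothesis.

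The main technical step is the Lyapunov condition for this triangular array, which reads
\[ \f{\bb{E}|T_\d - U(\d)|^3}{n_\d^{1/2}\sigma_\d^3} \ \longrightarrow\ 0. \]
The bulk of the work is the moment bound $\bb{E}T_\d^3 \leq C U(\d)^3$ with $C$ independent of $\d$. This is obtained from the exponential tail estimate $\bb{P}(T_\d > s) = \bb{P}(X_s \leq \d) \leq e^{\lambda \d - s\Phi(\lambda)}$ (via Markov's inequality applied to $e^{-\lambda X_s}$): taking $\lambda = 1/\d$ and integrating by parts yields $\bb{E}T_\d^3 \leq C\Phi(1/\d)^{-3}$, and the classical comparison $\Phi(1/\d)^{-1} \asymp U(\d)$ (\cite[Prop 1.4]{b99}) produces the claimed bound. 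Substituting $n_\d \asymp t/U(\d)$ into the Lyapunov ratio gives $O(U(\d)^{7/2}/\sigma_\d^3) = O\l((U(\d)^{7/3}/\sigma_\d^2)^{3/2}\r)$, so the assumption $\l(\ref{lemmm}\r)$ is precisely what forces the ratio to vanish.

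Finally, the $O(1)$ coupling discrepancy between $N(t,\d)$ and $\hat{N}(t,\d)$ is negligible compared to the normalising factor $t^{1/2} b(\d) = t^{1/2} U(\d)^{-3/2}\sigma_\d$, which under $\l(\ref{lemmm}\r)$ is at least of order $t^{1/2} U(\d)^{-1/3} \to \infty$. The main obstacle is the uniform moment control $\bb{E}T_\d^3 = O(U(\d)^3)$; once this is in hand, the remainder is a careful assembly of renewal duality and the triangular-array Lyapunov CLT.
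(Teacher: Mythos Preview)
Your proposal is correct and follows essentially the same route as the paper: both arguments exploit the renewal duality $\{N(t,\d)\leq n\}=\{S_n\geq t\}$ with $n\sim t/U(\d)$, reduce to a triangular-array CLT governed by the third-moment ratio $\bb{E}|T_\d-U(\d)|^3/(n^{1/2}\sigma_\d^3)$, and control this ratio via the uniform bound $\bb{E}[T_\d^m]\leq C_m U(\d)^m$ obtained from $\bb{P}(T_\d>s)\leq e\, e^{-s\Phi(1/\d)}$ together with $\Phi(1/\d)^{-1}\asymp U(\d)$. The only cosmetic difference is that the paper invokes Berry--Esseen (which directly yields the uniform-in-$x$ bound) whereas you phrase it as the Lyapunov condition; the resulting ratio $U(\d)^{7/2}/\sigma_\d^3=(U(\d)^{7/3}/\sigma_\d^2)^{3/2}$ is identical in both.
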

 
\n The proof of  Lemma \mbox{\ref{suffcond}} relies upon the Berry-Esseen Theorem, a very useful result for proving central limit theorem results as it provides the speed of convergence, which is stated here in Lemma \mbox{\ref{be}}. See \cite[p542]{f08} for more details.    

\b{lemma}(Berry-Esseen Theorem) \label{be} Let $Z\sim \mathcal{N}(0,1)$. There exists a finite constant $c>0$ such that for every collection of iid random variables $(Y_k)_{k\in\bb{N}}$  with the same distribution as $Y$, where $Y$ has finite mean, finite absolute third moment, and finite non-zero variance,  for all $n\in \bb{N}$ and  $x \in \bb{R}$, 
\b{equation} \label{berry}   \l|   \bb{P}\l(     \f{  Y_1 - \bb{E}[Y] + \cdots + Y_n - \bb{E}[Y]   }{   \textup{Var}( Y  )^\f{1}{2}  \sqrt{n} }         \geq   x   \r)   - \bb{P}(   Z \geq x)      \r| \leq   \f{ c \bb{E}[|Y - \bb{E}[Y]|^3]    }{  \textup{Var}(Y)^\f{3}{2}   \sqrt{n}   }     .
\e{equation}

\e{lemma}

 \n For brevity, we will only provide calculations for $t=1$. The proofs for different values of $t$ are essentially the same. Recall the definitions $a(\d):= U(\d)^{-1}$,   $\sigma_\d^2:=\textup{Var}(T_\delta)$, and $b(\d):=  U(\delta)^{-\f{3}{2}}\sigma_\d  $. We shall aim to prove that for all $x\in\bb{R}$, 

  \[
   \lim_{\d\rightarrow0+} \l| \mathbb{P}\l(    \frac{ N(1,\delta) - a(\delta)}{b(\delta)} \leq x \r)  -  \mathbb{P}\l(    Z \leq x \r)     \r| =0.
 \] 

 \n For each $\d>0$, $\l(\ref{berry}\r)$ provides an upper bound, and  then under condition (\ref{intcond}), we can prove that this bound converges to zero as $\d\rightarrow0$.

 \b{proof}[Proof of Lemma \mbox{\ref{suffcond}}]  Let $T_{\delta}^{(k)}$ denote the $k$th time at which $N(t,\d)$ increases, and let $T_{\delta , k}$, $k \in \bb{N}$, denote iid copies of $T_\delta^{(1)}$. By the strong Markov property, $ T_{\delta}^{(k)}$ and $ \sum_{i=1}^kT_{\delta , i} $ have the same distribution. Then, with $n:= \lceil a(\delta) + x b(\delta) \rceil  $, where $\lceil \cdot \rceil$ denotes the ceiling function,
\begin{equation} \label{b} \mathbb{P}\l(    \frac{ N(1,\delta) - a(\delta)}{b(\delta)} \leq x \r) = \mathbb{P}\l(  N(1,\delta)  \leq a(\delta) + x b(\delta) \r),
\end{equation}  
\n and   since $N(1,\d)$ only takes integer values, using the fact that $T_\d^{(n)}$ has the same distribution as the sum of $n$ iid copies of $T_\d^{(1)}$, it follows that
\begin{equation}\begin{split} \l(\ref{b}\r)
    =   \mathbb{P}\big(  N(1,\delta)  \leq n  \big) &= \mathbb{P}\l(  T_\delta^{ (n)} \geq 1 \r)
   \hspace{0.2cm} =  \hspace{0.2cm}  \mathbb{P}\l(  \hspace{0.228cm} \sum_{i=1}^n T_{\delta , i}    \geq 1   \hspace{0.228cm}  \r)  
 \\
 &= \mathbb{P}\l(  \hspace{0.4cm}  \sum_{i=1}^n   \big(  T_{\delta , i}   -   U(\d)     \big)   \geq   1  -  nU(\d) \hspace{0.08cm}   \r)   
\\ \label{probexpre}  &=  \mathbb{P}\l(  \frac{ \sum_{i=1}^n \big( T_{\delta , i}  -  U(\d) \big) }{   \sqrt{n \sigma_\d^2  }  }   \geq  \frac{ 1  - n  U(\d) }{  \sqrt{n \sigma_\d^2  }   } \r).  
\end{split} 
\end{equation}

\n It follows from Lemma  \mbox{\ref{momentm}} that  $\sigma_\d^2 \leq \bb{E}[T_\d^2]\leq c U(\d)^2$,  which then implies that $b(\d) =  o(a(\d))$ as $\d\rightarrow0$. Then, as $\d\to0$, the asymptotic behaviour of  $n$  is
 \[  n =  \lceil a(\delta) + x b(\delta) \rceil \sim  a(\delta)+ x b(\delta) = a(\delta) +o(a(\d)) \sim a(\d) = U(\d)^{-1}.\]  It follows, with $x^\prime$ depending on $x$ and $\d$, that as $\d\rightarrow0$, \vspace{0.1cm}
           \begin{align}
             \label{c} -x^\prime :&=   \frac{ 1  - n U(\d)  }{   \sqrt{n \sigma_\d^2  }    }     =      \frac{ 1  - \lceil  a(\delta) + x b(\delta)   \rceil U(\delta)  }{  \big(\lceil  a(\delta) + x b(\delta)   \rceil \big)^{\frac{1}{2}} \sigma_\d     }       \sim  \frac{ 1  - (  a(\delta) + x b(\delta)   ) U(\delta)  }{  (  a(\delta) + x b(\delta)   )^{\frac{1}{2}} \sigma_\d    } 
    \\  
          \label{xx}     &=   \frac{ 1  - 1 - x b(\delta)  U(\delta)  }{  ( a(\d) + x b(\delta)   )^{\frac{1}{2}}\sigma_\d    }     \sim    \hspace{0.26cm}   \frac{  - x b(\delta)  U(\delta)  }{   U(\delta)^{-\frac{1}{2}} \sigma_\d    } \hspace{0.26cm}  =      \frac{  - x b(\delta)  U(\delta)^\f{3}{2}  }{    \sigma_\d      } \hspace{0.16cm}  = \hspace{0.16cm} -x.  \end{align}

 \vspace{0.1cm}

   \n Now,  by the triangle inequality and symmetry of the normal distribution, combining $(\ref{probexpre})$ and $(\ref{xx})$,  it follows that as $\d\to0$, for any $x\in\bb{R}$, \vspace{0.1cm}
 \begin{equation} \begin{split} 
   \ \ \ \     \l| \mathbb{P}\l( \frac{N(1,\delta)-a(\delta)}{b(\delta)} \leq x\r) - \mathbb{P}\l(Z\leq x\r)\r| 
\leq \l| \mathbb{P}\l( Z \geq -x^\prime  \r) - \mathbb{P}\l(Z\geq -x\r)  \r|  &
\\
  +  \l| \ \mathbb{P}  \l(  \frac{ 1}{   \sqrt{n \sigma_\d^2  }  }  \sum_{i=1}^n \l(T_{\delta , i}  -  U(\d) \r)  \geq  -x^\prime \r)   -  \  \mathbb{P}\l( \ Z\geq -x^\prime   \r) \   \r|   & 
\\
 \label{laststep}
      =  \l| \ \mathbb{P}  \l(  \frac{ 1}{   \sqrt{n \sigma_\d^2  }  }  \sum_{i=1}^n \l(T_{\delta , i}  -  U(\d) \r)  \geq  -x^\prime \r)   -  \  \mathbb{P}\l( \ Z\geq -x^\prime   \r) \   \r| &  + o(1). 
 \end{split}
 \end{equation}

 
 \n  Recall that we wish to show that $\l(\ref{laststep}\r)$ converges to zero. By  the Berry-Esseen Theorem and the fact that $n\sim U(\d)^{-1}$, it follows that as $\d\to0$, \vspace{0.1cm}
\b{equation*} 
  \l(\ref{laststep}\r) 
\leq 
 C      \f{     \bb{E}[|T_\delta-U(\delta)|^3]  }{  \sigma_\d^3  n^\f{1}{2}   }  + o(1)
 \sim 
 C \f{  U(\d)^\f{1}{2}   \bb{E}[|T_\delta-U(\delta)|^3]  }{   \sigma_\d^3      }     .
\e{equation*} 
 Applying the triangle inequality, then Lemma  \mbox{\ref{momentm}} with $m=2$ and $m=3$  to $\bb{E}[ |T_\d - U(\d)|^3  ]$, it follows that
\[ \l(\ref{laststep}\r)\leq 8C \f{U(\d)^\f{1}{2}  U(\d)^3}{   \sigma_\d^3  }    = 8C \l(\f{U(\d)^\f{7}{3}}{   \sigma_\d^2  }  \r)^{\f{2}{3}}.
\] 
Therefore if the condition $\l(\ref{lemmm}\r)$ as in the statement of Lemma \mbox{\ref{suffcond}} holds, then the desired convergence in distribution $\l(\ref{thm}\r)$ follows, as required.

 \e{proof}

\newpage

\b{lemma}  \label{momentm}  For every subordinator with infinite L\'evy measure, for all $m\geq 1 $,
 \[
  \limsup_{\d\rightarrow0+} \f{\bb{E}[ T_\delta^m ]}{U(\delta)^m}  <\infty.  
 \]  
 \e{lemma}

  \b{proof}[Proof of Lemma \mbox{\ref{momentm}}]  First, by the moments and tails lemma (see \cite[p26]{k06}),
  \b{equation*}    
  \f{\bb{E}[T_\d^m]}{U(\d)^m} 
   =
     \bb{E}\l[\l(\f{T_\d}{U(\d)} \r)^m\r]  
      =\int_{0}^{\infty}m y^{m-1}\bb{P}\l(\f{T_\delta}{U(\delta)}>y\r)dy.
 \e{equation*}
 \n By the definition of $T_\d$, it follows that $ X_u \geq \d$ if and only if $T_\d \leq u $, and then
 \[  \f{\bb{E}[T_\d^m]}{U(\d)^m} 
 =\int_{0}^{\infty}m y^{m-1}\bb{P}(X_{yU(\delta)}\leq\delta)dy  = \int_{0}^{\infty}my^{m-1}\bb{P}(e^{ - \f{1}{\d} X_{yU(\delta)}}\geq e^{-1})dy.
 \]
   Now, applying Markov's inequality, the definition $\bb{E}[e^{-\lambda X_t}]=e^{-t\Phi(\lambda)}$, and  the fact that $U(\d)  \Phi(1/\d) \geq c$  for some constant $c$ (see \cite[Prop 1.4]{b99}),
		 \[ \f{\bb{E}[T_\d^m]}{U(\d)^m} 
		 \leq  
		         \int_{0}^{\infty} m y^{m-1}e^{1-yU(\delta)\Phi(1/\delta)}dy
		 \leq    \int_{0}^{\infty}my^{m-1}e^{1-cy}dy,
 \] which is finite and independent of $\d$. Therefore the $\limsup$ is finite, as required.
 \e{proof}





 \subsection{Proof of Theorem \mbox{\ref{nclt}}}

 \n Theorem \mbox{\ref{nclt}} is proven by a contradiction, using Lemma \mbox{\ref{iic}} to show that the sufficient condition  in Lemma \mbox{\ref{ld}} holds.

  
  \b{lemma} \label{iic} Recall the definition $I(\d):=\int_0^\d \overline{\Pi}(x)dx$. The condition $\l( \ref{intcond}   \r)$ implies that for each $\eta\in(0,1)$,  there exists a sufficiently large integer $n$ such that
  \b{equation} \label{improvedintcond}       \liminf_{\d\rightarrow0} \f{ I(\d)}{   I(2^{-n} \d)   } > \f{1}{\eta}.
  \e{equation}
 
  \e{lemma}

 \b{proof}[Proof of Lemma \mbox{\ref{iic}}]
\n The integral condition $\l( \ref{intcond}   \r)$ imposes that for some $B>1$,
\begin{equation} \label{1}  \liminf_{\delta \rightarrow0}  \f{ I(\delta)}{I(\delta/2)}   =  \liminf_{\delta \rightarrow0}  \f{  \int_0^\delta  \overline{\Pi}(y)(dy)     }{  \int_0^{\delta/2} \overline{ \Pi}(y) dy    }  =B.
\end{equation}
Then, by effectively replacing $1/2$ with $2^{-n}$ (so $1/2$ is replaced by a smaller constant), we can replace $B$ with $B^n$, which can be made arbitrarily large by choice of $n$. This follows by splitting up the fraction,
 \begin{align*}   & \liminf_{\delta \rightarrow0+}  \f{ I(\delta)}{I(2^{-n}\delta)}   =   \liminf_{\delta \rightarrow0+} \l( \f{ I(\delta)}{I(2^{-1}\delta)}  \f{ I(2^{-1}\delta)}{I(2^{-2}\delta)}   \cdots \f{ I(2^{-(n-1)}\delta)}{I(2^{-n}\delta)}  \r)   
\\
 \geq    & \liminf_{\delta \rightarrow0+} \l(  \f{ I(\delta)}{I(2^{-1}\delta)}\r)   \liminf_{\delta \rightarrow0+} \l( \f{ I(2^{-1}\delta)}{I(2^{-2}\delta)}  \r) \cdots \liminf_{\delta \rightarrow0+} \l( \f{ I(2^{-(n-1)}\delta)}{I(2^{-n}\delta)}  \r)  = B^n    > \f{1}{\eta},
\end{align*}

\n where we simply take $n$ sufficiently large that $B^n > 1/\eta$. 

\e{proof} 
 \n Using Lemma \mbox{\ref{iic}} for a contradiction is the step in the proof of Theorem \mbox{\ref{nclt}} which requires the condition $\l(\ref{intcond} \r)$. In order to prove Theorem \mbox{\ref{nclt}}, we require the  notation introduced in Definition \mbox{\ref{gR}}. We refer to \cite[p93]{jp87} for more details.  
 \b{definition} 
 \label{gR} Recalling from Remark \mbox{\ref{subord}} that the process $\tilde{X}^\d$ has Laplace exponent  $\tilde{\Phi}^{\delta}(u)= \textup{d} u + \int_0^\d (1-e^{-ux})\Pi(dx) + (1-e^{-u\d}) \ov$, we define:

\n  (i) $g(u):= \f{d}{du}\tilde{\Phi}^{\delta}(u) = \textup{d} + \int_0^\d x e^{-ux} \tilde{\Pi}^\d (dx)  $,

\n (ii) $ R(u):= \tilde{\Phi}^{\delta}(u) - ug(u)   = \int_0^\d    \l(1 - e^{-ux}(1+ux) \r)\tilde{\Pi}^\d(dx)$,

\n (iii)  $\lambda_\d$  denotes the unique solution to  $g(\lambda_\d) = x_\d$, for $\textup{d}<x_\d< \textup{d} + \int_0^\d x \tilde{\Pi}^\d(dx)  $.

 \e{definition}
 
\n One can ignore the drift  $\textup{d}$ in Definition \mbox{\ref{gR}}, since $\textup{d}=0$ throughout Section \mbox{\ref{ncltproof}}. The proof of Theorem \mbox{\ref{nclt}} now requires the following lemma:
 
   \b{lemma} \label{ld} For $\alpha>0$, $t= (1+\alpha)U(\d) $,  and $g(\lambda_\d) = x_\d= \d/t$, if
  \[\limsup_{\d\rightarrow0} \ \d  \lambda_\d < \infty,
  \]
   then the desired convergence in distribution   $\l(\ref{thm}\r)$,  as in Theorem \mbox{\ref{nclt}}, holds.
   \e{lemma}
 
 \b{proof}[Proof of Theorem \mbox{\ref{nclt}}]

\n Assume for a contradiction that there exists a sequence $(\d_m)_{m\geq1}$ converging to zero, such that $\lim_{m\to
\infty} \lambda_{\d_m} \d_m = \infty$. That is to say, assume that the sufficient condition in Lemma \mbox{\ref{ld}} doesn't hold.  For brevity, we omit the dependence of $\d_m$ on $m$.  Hence for all fixed $\eta,n>0$,  $ \eta\geq e^{-\lambda_\d 2^{-n} \delta} $ for all small  enough $\d>0$. By Fubini's theorem,  $I(\d)  \hspace{-0.05cm} =   \hspace{-0.05cm} \int_0^\d \overline{\Pi}(x)dx   \hspace{-0.05cm}  =   \hspace{-0.05cm}  \int_0^\d x \tilde{\Pi}^\delta (dx) $,  so
\b{align}  
\nonumber
\eta I(\delta)   \hspace{-0.05cm}  +    \hspace{-0.05cm}  I( 2^{-n}\d) &\geq e^{-\lambda_\d 2^{-n} \d}I(\delta)   \hspace{-0.05cm}   +   \hspace{-0.05cm}  I( 2^{-n} \d) 
  \geq e^{-\lambda_\d  2^{-n}\d}     \hspace{-0.05cm}   \int_0^{\delta } x \tilde{\Pi}^\delta (dx) +   \hspace{-0.05cm}  \int_0^{ 2^{-n}\d }      \hspace{-0.3cm}  x \Pi (dx)
\\
  \label{ttrr}  &= e^{-\lambda_\d  2^{-n}\d}\delta \overline{\Pi}(\delta) +  e^{-\lambda_\d  2^{-n}\d}\int_0^{\delta } x \Pi (dx) +     \hspace{-0.05cm}  \int_0^{ 2^{-n}\d }   \hspace{-0.5cm}  x \Pi (dx).  
\end{align} 
Removing part of the first integral and noting $1 \geq e^{-\lambda_\d x}$ for all $x>0$,
\[
 \l( \ref{ttrr} \r)  \geq e^{-\lambda_\d  2^{-n} \d}\delta \overline{\Pi}(\delta) +  \int_{ 2^{-n}\d}^{\delta } e^{-\lambda_\d 2^{-n} \d} x \Pi (dx) + \int_0^{ 2^{-n}\d } e^{-\lambda_\d x} x \Pi (dx).
\] 
Now,  $ e^{-\lambda_\d 2^{-n} \d} \geq e^{-\lambda_\d x}$  for $x\geq 2^{-n}\d$.  So for  $g(\lambda_\d)=x_\d = \f{\d}{ (1+\alpha)U(\d)  }$, where $\alpha>0$ is fixed and chosen sufficiently large that $x_\d<\int_0^\d x \tilde{\Pi}^\d(dx)$ for all $\d$ (this is possible  by the relation $U(\d)^{-1}\asymp I(\d)/\d$, see \cite[Prop 1.4]{b99}), 
\b{align*}
  \l( \ref{ttrr} \r)    &\geq e^{-\lambda_\d  2^{-n}\d}\delta \overline{\Pi}(\delta) +  \int_{ 2^{-n}\d}^{\delta } e^{-\lambda_\d x} x \Pi (dx) + \int_0^{ 2^{-n}\d } e^{-\lambda_\d x} x \Pi (dx) \hspace{0.52cm}
\\
 &= e^{-\lambda_\d  2^{-n}\d}\delta \overline{\Pi}(\delta) +  \int_0^{\delta } e^{-\lambda_\d x} x \Pi (dx)
  \geq g(\lambda_\d)  = \frac{\delta}{(1+\alpha)U(\delta)}
\geq  \frac{I(\delta)}{(1+\alpha)K} ,
\end{align*}

\n where the last two inequalities respectively follow by Definition \mbox{\ref{shorteneddefn}}, Definition \mbox{\ref{gR}} $(i)$ with $\textup{d}=0$, and the relation $U(\d)^{-1} \asymp I(\d)/\d$, see \cite[p74]{b98}.  So   for a  constant $K>0$, for    all sufficiently small $\delta>0$, we have shown $\eta I(\delta) + I( 2^{-n}\d) \geq \frac{I(\delta)}{(1+\alpha)K}$.  

 \n  Taking $\eta>0$ small enough that $ \frac{1}{(1+\alpha)K} \geq 2\eta  $, it follows that $    I(2^{-n} \delta)  \geq \eta I(\delta)$, and hence $ \label{eta}   I(\delta)/I(2^{-n} \delta) \leq  1/\eta$.
%
%
 But in Lemma \mbox{\ref{iic}} we showed that for each fixed $\eta>0$, there is sufficiently large $n$ such that $ \liminf_{\d\to0} I(\d) / I(2^{-n}\d) > 1/\eta$, which is  a contradiction,  so the sufficient condition as in Lemma \mbox{\ref{ld}} must hold.

\e{proof}


 \b{remark} \textup{For a driftless subordinator, Theorem \mbox{\ref{nclt}} holds under the same condition $\l(\ref{intcond}\r)$  applied to the function $H(y):= \int_0^y x \Pi(dx)$  rather than the integrated tail function $I$. The integrated tail $I(y)=H(y) + y\overline{\Pi}(y)$ depends on  the large jumps of $X$ since $\overline{\Pi}(x)=\Pi((x,\infty))$, but  $H$ does not depend on the large jumps, so these  conditions are substantially different. 
 \\
 With only minor changes,   the argument  as in the proof of Theorem \mbox{\ref{nclt}} works with $H$ in place of $I$. Under condition $\l(\ref{intcond}\r)$ for $H$ in place of $I$, one can prove that Lemma \mbox{\ref{iic}} holds with $H$ in place of $I$. Then    we assume for a contradiction that there exists a sequence $(\d_m)_{m\geq1}$ converging to zero, such that $\lim_{m\to
\infty} \lambda_{\d_m} \d_m = \infty$. But  then as in the proof of Theorem \mbox{\ref{nclt}}, one can deduce that $\eta H(\d) + H(2^{-n}\d) \geq \f{1}{(1+\alpha)K^\prime}H(\d) $, which  contradicts the analogous Lemma \mbox{\ref{iic}} result with $H$ in place of $I$.  }
 \e{remark}

  \b{remark} \textup{Theorem \mbox{\ref{nclt}} can also be proven for subordinators with a  drift $\textup{d}>0$, under a stronger regularity condition. For $Y_t:=X_t - \textup{d}t$,  define $\Phi_Y$ as the Laplace exponent of $Y$. The convergence in distribution $\l( \ref{thm}\r)$ holds whenever $\limsup_{x\to0} x^{-5/6} \Phi_Y(x) <\infty$. This is proven using Remark \mbox{\ref{driftt}}, the inequality $\bb{P}(Y_t< a) \geq  1 - C t h( a)$ for all L\'evy processes (see \cite[p954]{p81} for details), and the asymptotic expansion of $U(\d)$ as in \cite[Theorem 4]{ds11}.}
 \e{remark}

  \subsection{Proofs of Lemmas \mbox{\ref{suffcondtoliminf}}, \mbox{\ref{trlt}}, \mbox{\ref{ld}}}

  Lemmas \mbox{\ref{suffcondtoliminf}}, \mbox{\ref{trlt}}, and \mbox{\ref{ld}} give  sufficient conditions for Theorem \mbox{\ref{nclt}} to hold.  The proofs for these lemmas are facilitated by Lemma \mbox{\ref{jp}}, which was proven in 1987 by Jain and Pruitt  \cite[p94]{jp87}. Recall that $\tilde{X}^\d $ denotes the process with $\d$-shortened jumps, as defined in Definition \mbox{\ref{shorteneddefn}}.

 \b{lemma} \label{suffcondtoliminf}   The convergence in distribution   $\l(\ref{thm}\r)$ as in Theorem \ref{nclt} holds if for some $\alpha\in(0,1]$,  \(  \liminf_{\delta\rightarrow0} \l[ \bb{P}\l(   \tilde{X}^\d_{(1+\alpha)U(\delta) } \leq  \delta    \r)  +  \bb{P}\l(     \tilde{X}^\d_{(1-\alpha)U(\delta) } \geq  \delta    \r)  \r] >0\). 

 \e{lemma}

 \b{proof}[Proof of Lemma \ref{suffcondtoliminf}]
 For all $\alpha>0$, recalling that $\bb{E}[T_\d] = U(\d)$,
 \[
 \sigma_\d^2 = \var(T_\d) \geq \var(T_\d ; |T_\d - U(\d)| \geq \alpha U(\d)) 
  \] 
 \[ 
 \geq \alpha^2 U(\d)^2 [ \bb{P}(  T_\d \geq (1+\alpha)U(\d)) +  \bb{P}(  T_\d \leq (1-\alpha)U(\d) )].
 \] For the desired convergence in distribution $\l(\ref{thm}\r)$ to hold, it is sufficient by Lemma \mbox{\ref{suffcond}} to show that $ \lim_{\d\rightarrow0}  U(\d)^\f{7}{3} / \sigma_\d^2=0$. Now,
 \[
      \f{U(\d)^\f{7}{3}}{\sigma_\d^2} \leq  \f{U(\d)^\f{1}{3}}{  \alpha^2 [  \bb{P}(  T_\d \geq (1+\alpha)U(\d) )  + \bb{P}(  T_\d \leq (1-\alpha)U(\d) )   ]  }.
 \]
Note that $T_\d \geq t  $ if and only if $\tilde{X}^\d_t \leq \d $ since jumps of size larger than $\d$ do not occur in either case, and so $X_t=\tilde{X}^\d_t$ when $T_\d\geq t$. It follows that $\l(\ref{thm}\r)$ holds if 
 \[  \liminf_{\delta\rightarrow0} \l[ \bb{P}\l(   \tilde{X}^\d_{(1+\alpha)U(\delta) } \leq  \delta    \r)  +  \bb{P}\l(   \tilde{X}^\d_{(1-\alpha)U(\delta) } \geq  \delta    \r)  \r]      >0. 
 \]
 \e{proof}
   \b{remark}\label{driftt}
The condition in Lemma \mbox{\ref{suffcondtoliminf}} is not optimal. If for $\eps\in\l(0,\f{1}{6}\r)$,  \( \lim_{\delta\rightarrow0} U(\d)^{2\eps - \f{1}{3}} \l[ \bb{P}\l(   \tilde{X}^\d_{U(\delta) + U(\delta)^{1+\eps} } \leq  \delta    \r) + \bb{P}\l(   \tilde{X}^\d_{U(\delta) - U(\delta)^{1+\eps} } \geq  \delta    \r) \r]  =\infty,\) then the convergence in distribution $\l(\ref{thm}\r)$ follows too. This stronger condition does not lead to any more generality than the condition $\l(\ref{intcond}\r)$ for driftless subordinators.
 \e{remark}

  \b{lemma} [Jain, Pruitt \hspace{-6pt} \mbox{\cite[Lemma 5.2]{jp87}}]  \label{jp}   There exists $c>0$ such that for every $\eps>0$,  $t\geq0$ and $x_\d>0$ satisfying $\textup{d} = g(\infty) < x_\d < g(0) = \textup{d} + \int_0^\d x \tilde{\Pi}^\d(dx)  $,
 \b{equation} \label{ii} \bb{P}\l(  \tilde{X}^\d_t \leq t x_\d\r)    \geq     \l(  1 - \f{  (1+\eps)c }{\eps^2 tR(\lambda_\d)    }     \r)e^{ - (1+2\eps)tR(\lambda_\d)}.
 \e{equation}
 
 \e{lemma}


 \b{lemma} \label{trlt}    
 For $\alpha>0$, $t= (1+\alpha)U(\d) $,  and $g(\lambda_\d) = x_\d= \d/t$, if
 \[ \limsup_{\delta\rightarrow0} \  tR(\lambda_\d) < \infty ,\]
   then the desired convergence in distribution   $\l(\ref{thm}\r)$,  as in Theorem \mbox{\ref{nclt}}, holds.
 
 \e{lemma}

  \b{proof}[Proof of Lemma \mbox{\ref{trlt}}]
 
 \n Applying the inequality (\ref{ii}) from Lemma \mbox{\ref{jp}},  
 \b{equation} \label{jpapply} \bb{P}\l(   \tilde{X}^\d_{(1+\alpha)U(\delta) } \leq  \delta    \r)   \geq     \l(  1 - \f{  (1+\eps)c }{\eps^2 tR(\lambda_\d)    }     \r)e^{ - (1+2\eps)tR(\lambda_\d)}. 
 \e{equation}
 
 \n Now, letting $\limsup_{\d\rightarrow0} tR(\lambda_\d)<\infty$, we will consider two separate cases:
 \newline (i) If $\liminf_{\d\rightarrow0}tR(\lambda_\d)=\beta >0$, then by choice of $\eps>0$ such that $\f{1+\eps}{\eps^2} = \f{\beta}{2c}$, the lower bound in (\ref{jpapply}) is larger than a positive constant as $\delta\rightarrow0$. 
 
 \n(ii) If $\liminf_{\delta\rightarrow0} tR(\lambda_\d) = 0$, then imposing $\eps =  2c/(tR(\lambda_\d))$, the lower bound in (\ref{jpapply}) is again larger than a positive constant as $\delta\rightarrow0$. The desired convergence in distribution  $\l(\ref{thm}\r)$  then follows in each case by Lemma \mbox{\ref{suffcondtoliminf}}.   

 \e{proof}


\b{proof}[Proof of Lemma \mbox{\ref{ld}}]
Noting that $1 - e^{-y}(1+y) \leq y$ for all $y>0$, 
\[tR(\lambda_\d) =  (1+\alpha)U(\d) \int_0^\d    ( 1 - e^{-\lambda_\d x}(1+\lambda_\d x) )\tilde{\Pi}^\d(dx)
\]
\b{equation} \label{improvedlemma}
 \leq  (1+\alpha)U(\d) \int_0^\d \lambda_\d x \tilde{\Pi}^\d(dx)   =  (1+\alpha)  U(\d)  \l( \int_0^\d x \Pi(dx) + \d\ov \r) \lambda_\d .
\e{equation}
 Then by the relation $U(\d) I(\d) \leq C \d$ for a constant $C$ (see \cite[Prop 1.4]{b99}),
\[ \l(\ref{improvedlemma}\r) = (1+\alpha) U(\d) I(\d) \lambda_\d \leq C     \d \lambda_\d.
\] So we can conclude that if $\limsup_{\d\rightarrow0}    \d \lambda_\d < \infty$, then the desired convergence in distribution   $\l(\ref{thm}\r)$ follows by Lemma \mbox{\ref{trlt}}.  

\e{proof}


\vspace{-0.56cm}

\section{Proofs of Results on $L(t,\d)$} \label{Lproofs}

\vspace{-0.36cm}

Firstly, we prove Theorem \mbox{\ref{asymp}}, which confirms that $L(t,\delta)$ can replace $N(t,\d)$ in the definition of the box-counting dimension of the range. This is done by showing that $L(t,\d)\asymp N(t,\d)$, which is known to be sufficient by Remark \mbox{\ref{freedom}}.

\begin{proof}[Proof of Theorem \mbox{\ref{asymp}}]  The jumps of the original subordinator $X$  and the process with shortened jumps $\tilde{X}^\delta$ are all the same size, other than  jumps bigger than size $  \delta$. The optimal number of intervals to cover the range,  $N(X,t,\delta)$,  always increases by $1$ at each jump bigger than size $\d$, regardless of its size, so it follows that $N(X,t,\delta)=N(\tilde{X}^\d,t,\delta)$, with the obvious notation. 

\n Instead of counting the number $N(X,t,\d)$  of boxes  needed to cover the range of $X$, consider those needed for the range of the subordinator $X^{(0,\delta)}$ with L\'evy measure $\Pi(dx) \mathbbm{1}_{\{x<\delta\}}$ (so all jumps of size larger than $\d$ are removed), and adding $Y_t^\d$, which counts the number of jumps  larger than size $\d$ of $X$.  It follows that $N(X,t,\d) \leq N(X^{(0,\delta)},t,\d) + Y_t^\d     \leq 2N(X,t,\d)$.

\n Consider  $M(X^{(0,\delta)},t,\delta)$, the number of intervals in a lattice of side length \(\delta\) to intersect with the range of  $X^{(0,\delta)}$. It is easy to show that $N(t,\d) \asymp M(t,\delta)$ (see \cite[p42]{f04}). Also, $M(X^{(0,\delta)},t,\delta)= \lceil  \f{1}{\d} X_t^{(0,\delta)}  \rceil$, since $X^{(0,\delta)}$ has no jumps of size larger than $\d$.  Now,  $ \f{1}{\d} X_t^{(0,\delta)} \asymp \lceil \f{1}{\d}  X_t^{(0,\delta)}  \rceil$ for  small enough $\d$, and hence
\[ 
 L(X,t,\delta) =  \frac{1}{\delta}  \tilde{X}_t^\delta = \frac{1 }{\delta} X_t^{(0,\delta)}+ Y_t^\d  \asymp M(X^{(0,\delta)},t,\delta) + Y_t^\d\]
 \[ \asymp N(X^{(0,\delta)},t,\delta) + Y_t^\d \asymp N(X,t,\d) .
 \]
By Remark \mbox{\ref{freedom}},  $\lim_{\d\rightarrow0}  \f{ \log(L(t,\d)) }{\log(1/\d)  } =  \lim_{\d\rightarrow0} \f{ \log(N(t,\d)) }{\log(1/\d)  }$, and hence $L(t,\d)$ can be used to define the box-counting dimension of the range of any subordinator.

  \qedhere
  \end{proof}

\n Next we will prove the CLT result for $L(t,\delta)$, working with $t=1$ for brevity. The proof is essentially the same for other  values of $t>0$. We will show convergence of the Laplace transform of $\f{1}{v(\d)} (L(1,\d) - \mu(\d) )  $ to that of the standard normal distribution. Recall that $Z\sim \mathcal{N}(0,1)$ has Laplace transform  $\mathbb{E}[ e^{- \lambda Z} ] = e^{\lambda^2 /2}$.

\begin{proof}[Proof of Theorem \mbox{\ref{lclt}}]  

 By Remark \mbox{\ref{subord}} and $\l(\ref{lk}\r)$, $ \delta L(t,\delta) = \tilde{X}_t^\delta$ is a subordinator with Laplace exponent $ \tilde{\Phi}^\delta$, and it follows that for any $\lambda\geq0$,
       \b{equation*}
    \lim_{\d\rightarrow0}   \mathbb{E}\l[ \textup{exp}\l( -\lambda   \frac{L(1,\d) - \mu(\delta) }{ v(\delta) } \r) \r] \hspace{-0.03cm}   = \hspace{-0.03cm}  e^{\f{\lambda^2}{ 2}} \hspace{-0.14cm}   \iff \hspace{-0.14cm}   \lim_{\d\rightarrow0} \l(  \frac{\lambda  \mu(\delta)  }{v(\delta)}   - \tilde{\Phi}^\delta \l(\frac{\lambda}{\d v(\delta)}\r)  \r)\hspace{-0.03cm} = \hspace{-0.03cm} \f{\lambda^2}{2}  . 
        \e{equation*}  
     Recalling  the definition $\mu(\d)=\f{1}{\d}(\textup{d}+I(\d))$, where $I(\d):= \int_0^\d x \tilde{\Pi}^\d (dx)$, and writing $\tilde{\Phi}^\d$ in the L\'evy Khintchine representation as in  $\l(\ref{lk} \r)$,  it follows that
 \[
        \frac{\lambda  \mu(\delta)  }{v(\delta)}   - \tilde{\Phi}^\delta \l(\frac{\lambda}{\d v(\delta)}\r)  =      \frac{\lambda   ( \textup{d} + I(\delta) ) }{ \d v(\d)   }  -  \f{ \textup{d} \lambda }{  \d v(\d)   }   - \int_0^\d (1- e^{- \frac{\lambda x}{\d v(\delta)}}  ) \tilde{\Pi}^\delta (dx)
       \]
       \b{equation}\label{terms}
        =    \frac{\lambda  I(\delta) }{ \d v(\d)   }     - \hspace{-0.05cm}\int_0^\d \hspace{-0.05cm} (1- e^{- \frac{\lambda x}{\d v(\delta)}}  ) \tilde{\Pi}^\delta (dx) =  \hspace{-0.05cm}  \int_0^\d \frac{\lambda x}{ \d v(\d)   } \tilde{\Pi}^\d(dx)     - \hspace{-0.05cm}\int_0^\d \hspace{-0.05cm} (1- e^{- \frac{\lambda x}{\d v(\delta)}}  ) \tilde{\Pi}^\delta (dx).
       \e{equation}
Then applying the fact that $  \f{y^2}{2} -  \f{y^3}{6} \leq  y  - 1  + e^{-y} \leq  \f{y^2}{2} $ for all $y>0$, 
\[ 
   \int_0^\d \l( \frac{\lambda^2 x^2}{2 \d^2 v(\d)^2   } - \frac{ \lambda^3 x^3}{ 6 \d^3 v(\delta)^3}  \r) \tilde{\Pi}^\d(dx)  
    \leq   \l(\ref{terms}\r)
     \leq  \int_0^\d \frac{\lambda^2 x^2}{ 2 \d^2 v(\d)^2   } \tilde{\Pi}^\d(dx)      .
  \]  
By the definition of $v(\d)$, it follows that $v(\d)^2 = \f{1}{\d^2} \int_0^\d x^2 \tilde{\Pi}^\d (dx)$, and so 
\[
\int_0^\d  \frac{ \lambda^2 x^2}{2 \d^2 v(\delta)^2}  \tilde{\Pi}^\d(dx)  = \f{\lambda^2}{2} .
\] 
It is then sufficient, in order  to show that $\l(\ref{terms}\r)$ converges to $\f{\lambda^2}{2}$, to prove that
\b{equation} \label{lcltcond}
\lim_{\d\rightarrow0}   \int_0^\infty  \frac{ x^3}{\d^3 v(\delta)^3}   \  \tilde{\Pi}^\delta(dx)   = 0.
 \e{equation}
Again by the definition of $v(\d)$, for $\l(\ref{lcltcond}\r)$ to hold we require both
 \begin{align} \label{int1}  \lim_{\d\rightarrow0} \frac{    \int_0^\delta x^3 \Pi(dx)    }{    (  \int_0^\delta x^2 \Pi(dx)   + \delta^2 \overline{\Pi}(\delta) )^{\f{3}{2}}  }  = 0,
 \\
  \label{int2}  \lim_{\d\rightarrow0}  \frac{    \d^3  \overline{\Pi}(\delta) }{  (  \int_0^\delta x^2 \Pi(dx)   + \delta^2 \overline{\Pi}(\delta) )^{\f{3}{2}}  } =0.
   \end{align}
\n Squaring the expression in $\l(\ref{int1}\r)$, since $x\leq\d$ within each integral, it follows that
 \b{equation*}  \frac{   \big( \int_0^\delta x^3 \Pi(dx)\big)^2    }{   \big(  \int_0^\delta x^2 \Pi(dx)   + \delta^2 \overline{\Pi}(\delta) \big)^{3}  }  \leq   \frac{   \delta^2 \big( \int_0^\delta x^2 \Pi(dx)\big)^2    }{   \big(  \int_0^\delta x^2 \Pi(dx)   + \delta^2 \overline{\Pi}(\delta) \big)^{3} } .
\e{equation*}
 By the binomial expansion, $(a+b)^3 \geq 3a^2b$ for  $a,b>0$,  and then
   \[ 
   \l( \ref{int1}  \r)  \leq   \frac{    \delta^2 \big( \int_0^\delta x^2 \Pi(dx)\big)^2    }{  3 \big(  \int_0^\delta x^2 \Pi(dx)  \big)^2 \big(  \delta^2 \overline{\Pi}(\delta) \big) } 
      = \frac{1}{3 \overline{\Pi}(\delta)} \rightarrow 0    ,
 \] 
 since the L\'evy measure is infinite. For $\l(\ref{int2}\r)$, simply observe that 
 \[ 
 \frac{     \d^3 \overline{\Pi}(\delta) }{   (  \int_0^\delta x^2 \Pi(dx)   + \delta^2 \overline{\Pi}(\delta) )^{\f{3}{2}}  }
  \leq \frac{   \d^3 \overline{\Pi}(\delta) }{   ( \delta^2 \overline{\Pi}(\delta) )^{\f{3}{2}}  } 
   = \frac{1}{\overline{\Pi}(\delta)^{\f{1}{2}}} \rightarrow 0. 
     \]

\end{proof}

\n Next we will prove the almost sure convergence result for $L(t,\delta)$. If there is a drift and the L\'evy measure is finite,  then the result is trivial. So we need only consider cases with infinite L\'evy measure, and begin with the zero drift case. Using a Borel-Cantelli argument (see \cite[p32]{k06} for details), we shall prove that   $ \liminf_{\d\to0}L(t,\delta)/\mu(\delta) = \limsup_{\d\to0} L(t,\delta)/\mu(\delta) = t$ almost surely.

 \n First, we will  prove the  almost sure convergence to $t$ along a  subsequence $\delta_n$ converging to zero. Then, by monotonicity  of $\mu(\d)$ and $L(t,\d)$, we will deduce that for all $\d$ between $\d_n$ and $\d_{n+1}$,  $L(t,\d)/\mu(\d)$ also tends to $t$ as $\d_n\to0$.


\newpage
\begin{proof}[Proof of Theorem \mbox{\ref{lslln}}]

 \n For all $\varepsilon>0$, by Chebyshev's inequality and Remark~\mbox{\ref{meanvar}}, \begin{equation*} \begin{split}    \sum_n \ \  \mathbb{P} \  \Big( \ \  \Big| \ \frac{L(t,\delta_n )}{t\mu(\delta_n ) } -1\ \Big|  >   \varepsilon \  \Big)   \  &\leq \hspace{2 mm} \frac{1}{\varepsilon^2} \sum_n   \frac{ \text{Var} \big( L(t,\delta_n )\big)}{t^2\mu(\delta_n )^2  } 
     \\ =     \frac{1}{\varepsilon^2} \sum_n   \frac{  \frac{ t}{\delta_n^2} \big( \int_0^{\delta_n}  x^2  \Pi(dx)   + \delta_n^2 \overline{\Pi}(\delta_n)  \big) }{ \frac{  t^2 }{   \delta_n^2 } \big(    \int_0^{\delta_n}  x  \Pi(dx)   + \delta_n \overline{\Pi}(\delta_n)    \big)^2     }   &=  \   \frac{1}{t \varepsilon^2} \sum_n   \frac{  \big( \int_0^{\delta_n}  x^2  \Pi(dx)   + \delta_n^2 \overline{\Pi}(\delta_n)  \big) }{  \big(    \int_0^{\delta_n}  x  \Pi(dx)   + \delta_n \overline{\Pi}(\delta_n)    \big)^2     }   
      \end{split} \end{equation*}  \b{equation}  \label{summ}    \leq    \frac{1}{t \varepsilon^2} \sum_n   \frac{  \delta_n \big( \int_0^{\delta_n}  x \Pi(dx)  +  \delta_n \overline{\Pi}(\delta_n)  \big) }{  \big(    \int_0^{\delta_n}  x  \Pi(dx)   + \delta_n \overline{\Pi}(\delta_n)    \big)^2     }     = \   \frac{1}{ t \varepsilon^2} \sum_n \frac{1}{\mu(\delta_n)} . \hspace{2.4cm} \e{equation}
      
\n Recall that  $\mu(\delta)  = \int_0^\infty  \f{1}{\d} ( x \wedge \delta) \ \Pi(dx) $, so since  $ \f{1}{\d} ( x \wedge \delta)$ is non-decreasing as $\delta$ decreases, it follows that $\mu(\delta)$ is non-decreasing as $\delta$ decreases. Now, $\lim_{\d\to0}\mu(\d) = \infty$, and $\mu$ is continuous, so it follows that for any fixed $r \in (0,1)$ there is a decreasing sequence $\delta_n$ such that \(\mu(\delta_n) = r^{-n}\) for each $n$. Then $\l( \ref{summ} \r)$ is finite,  so by the  Borel-Cantelli lemma,  $\lim_{n\to\infty} L(t,\d_n/\mu(\d_n) =t$ almost surely.

\n When there is no drift, $L(t,\delta)  $ is given by changing the original subordinator's jump sizes from $y$ to $ \f{1}{\d} (y\wedge \delta)$. By monotonicity of this map, it follows that for a fixed sample path of the original subordinator, each individual jump of the process $L(t, \delta_{n+1} )$  is at least as big as the corresponding jump of the process $L(t, \delta_{n} )$.   So  $L(t,\delta)$ is   non-decreasing as $\delta$ decreases, and so for all $\delta_{n+1} \leq \delta \leq \delta_n$,   \[    
 \frac{ L(t,\delta_{n} )  }{  t\mu(\delta_n)   }  \frac{  \mu(\delta_{n})  }{  \mu(\delta_{n+1} ) } \leq \frac{L(t,\delta )  }{t\mu(\delta)}  
  \leq  \frac{ L(t,\delta_{n+1} ) }{t \mu(\delta_n) }  
  = \frac{ L(t,\delta_{n+1} )  }{ t \mu(\delta_{n+1})   }   \frac{  \mu(\delta_{n+1} ) }{  \mu(\delta_{n})  }. 
 \] Then by our choice of the subsequence $\d_n$,  it follows that for all $\delta_{n+1} \leq \delta \leq \delta_n$, 
 \b{equation} \label{lll}
   r  \frac{ L(t,\delta_{n} )  }{  t\mu(\delta_n)   }  \leq   \frac{ L(t,\delta ) }{t \mu(\delta) }    \leq  \frac{1}{r}   \frac{ L(t,\delta_{n+1} )  }{t  \mu(\delta_{n+1})   }   ,
   \e{equation}  and since $\lim_{n\to\infty}   L(t,\d_n)/\mu(\d_n)=t$, it follows that \[rt \leq \liminf_{\d\to0}  \f{ L(t,\d)}{\mu(\d) } \leq\limsup_{\d\to0}    \f{L(t,\d)}{\mu(\d)}  \leq \f{t}{r}. \]
Taking limits as $r\to 1$,   it follows that $\lim_{\d\to0} L(t,\d)/\mu(\d) =t$ almost surely.

\n For a process with a  positive drift $\textup{d}>0$ and infinite L\'evy measure,  denote the scaling term obtained by removing the drift as $\hat{\mu}(\delta):=\mu(\delta) - \textup{d}/\delta$. Then the above Borel-Cantelli argument for $\hat{\mu}$ yields the almost sure limit along a subsequence $\hat{\delta}_n$ as in $\l(\ref{summ} \r)$. Then since the functions $\mu(\delta)$ and $L(t,\delta)$ are again monotone in $\d$ when there is a drift, the  argument applies as in $\l(\ref{lll} \r)$. 

  \end{proof}


\begin{remark}  \textup{ Theorem \mbox{\ref{lslln}} is formulated in terms of the characteristics of the subordinator (i.e.\ the drift and L\'evy measure). For $N(t,\delta)$, the almost sure behaviour in Theorem \mbox{\ref{nslln}} is formulated in terms of the renewal function, and in order to write this in terms of the characteristics, the expression is  more complicated than for $L(t,\delta)$. For details, see \cite[Corollary 1]{s14} and \cite[Prop 1]{ds11}, the latter of which is very powerful for understanding the asymptotics of $U(\d)$ for subordinators with a positive drift, significantly improving upon results in \cite{cks11}. } \end{remark} 


\section{Extensions and Special Cases} \label{ext}

\subsection{Extensions: Box-Counting Dimension of the Graph}

The graph of a subordinator $X$ up to time $t$ is the set $\{   (s,X_s) :   0 \leq s \leq t   \}$. The box-counting dimensions of the range and graph are closely related. This is evident when we consider the mesh box counting schemes $M_G(t,\delta)$, $M_R(t,\delta)$, denoting graph and range respectively. The mesh box-counting scheme counts the number of boxes in a lattice of side length $\delta$ to intersect with a set.

\begin{remark}  \label{propn5.1} \textup{For every subordinator with infinite L\'evy measure or a positive drift,  \(M_G(t,\delta) = \l\lfloor  t / \d \r\rfloor   + M_R(t,\delta), \) where  $\lfloor \cdot \rfloor$ denotes the floor function.   Indeed, $M_R(t,\delta)$ increases by 1 if and only if $M_G(t,\delta)$ increases by 1 and the new box for the graph lies  directly above the previous box. For each integer $n$, $M_G(t,\delta)$  also increases at  time $n\delta$, the new box  directly to the right of the previous box.}\end{remark}
 \begin{remark} \textup{
 It follows that  the graph of every subordinator $X$ has the same box-counting dimension as the range of $X^\prime_t := t+X_t  $, the original process plus a unit drift. 
 }\end{remark}


\begin{proposition} \label{propn5.2} For every subordinator with drift \textup{d} $>0$, the box-counting dimensions of the range and graph agree almost surely.\end{proposition}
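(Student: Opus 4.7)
The plan is to use Remark \mbox{\ref{propn5.1}}, which gives the identity $M_G(t,\delta) = \lfloor t/\delta \rfloor + M_R(t,\delta)$, and to show that when the drift is positive the first term is absorbed (up to a multiplicative constant) by the second, so the two mesh counts are asymptotically comparable. Since $N(t,\delta) \asymp M_R(t,\delta)$ by the standard equivalence between lattice and optimal coverings (see \cite[p42]{f04}), this will give $M_G(t,\delta)\asymp M_R(t,\delta)$, and the box-counting dimensions of graph and range will then agree after taking $\log(\cdot)/\log(1/\delta)$ and sending $\delta\to0$.

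The crucial observation is that when $\textup{d}>0$, the shortened process $\tilde{X}^\delta$ still carries the drift, so deterministically $\tilde{X}_t^\delta \geq \textup{d}\,t$ for all sample paths. This immediately gives $L(t,\delta) = \delta^{-1}\tilde{X}_t^\delta \geq \textup{d}\,t/\delta$. By Theorem \mbox{\ref{asymp}}, $N(t,\delta)\asymp L(t,\delta)$, so there is a constant $c>0$ (independent of $\delta$) such that $N(t,\delta) \geq c\,t/\delta$ for all sufficiently small $\delta>0$, almost surely. In particular, $\lfloor t/\delta\rfloor \leq C\,N(t,\delta) \asymp C\,M_R(t,\delta)$ for an absolute constant $C$.

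Plugging this into Remark \mbox{\ref{propn5.1}}, we get
\[
M_R(t,\delta) \ \leq\ M_G(t,\delta) \ =\ \lfloor t/\delta\rfloor + M_R(t,\delta)\ \leq\ (C'+1)\,M_R(t,\delta),
\]
so $M_G(t,\delta) \asymp M_R(t,\delta)$ almost surely. Taking logarithms, dividing by $\log(1/\delta)$, and passing to the limit (which exists for the range by Theorem \mbox{\ref{nslln}} combined with Theorem \mbox{\ref{asymp}}), the $\asymp$-relation is preserved, and therefore the box-counting dimensions of the range and graph coincide almost surely.

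The only place where any work is really required is the lower bound $N(t,\delta)\gtrsim t/\delta$, which is the whole point of invoking the positive drift assumption; without it, the deterministic bound $\tilde{X}_t^\delta\geq \textup{d}\,t$ is lost and the $\lfloor t/\delta\rfloor$ term from the graph can potentially dominate $M_R(t,\delta)$, inflating the dimension of the graph above that of the range. Everything else is bookkeeping with the established $\asymp$-relations and Remark \mbox{\ref{freedom}}.
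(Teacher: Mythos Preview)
Your proof is correct and takes a genuinely different route from the paper's. The paper argues geometrically: it builds an explicit covering of the graph by squares (when $\textup{d}\geq 1$) or by $\tfrac{\delta}{\textup{d}}\times\delta$ rectangles (when $0<\textup{d}<1$), and uses $X_{\delta/\textup{d}}\geq \delta$ to force the passage-time boxes for the range to coincide with those for the graph, yielding $N_G=N_R$ directly (up to the harmless rectangle-to-square comparison). Your argument is instead algebraic: you invoke Remark~\ref{propn5.1} to write $M_G=\lfloor t/\delta\rfloor+M_R$, then use the deterministic bound $L(t,\delta)\geq \textup{d}\,t/\delta$ together with Theorem~\ref{asymp} and the standard $N\asymp M$ equivalence to absorb $\lfloor t/\delta\rfloor$ into $M_R$. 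Your route is shorter, treats all $\textup{d}>0$ uniformly without a case split, and nicely exploits the $L(t,\delta)$ machinery already developed in the paper; the paper's route is more self-contained (it does not call on Theorem~\ref{asymp}) and gives the slightly sharper conclusion $N_G=N_R$ rather than just $M_G\asymp M_R$. One minor remark: your parenthetical about the limit existing is unnecessary, since $M_G\asymp M_R$ already forces the upper and lower box-counting dimensions to agree regardless of whether the limit exists.
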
 



\begin{proof}[Proof of Proposition \mbox{\ref{propn5.2}}]
Letting $T_{(\d,\infty)}$ denote the first passage time of the subordinator above $\d$, consider an optimal covering of the graph with squares of side length $\d$ as follows: 

\n Starting with $[0,\delta] \times [0,\delta]$, at time $T_1 := \min(T_{(\delta,\infty)} , \delta)$, add a new box $[T_1 , T_1 + \delta] \times [X_{T_1} , X_{T_1} + \delta]$, and so on. Denote the number of these boxes  by $N_G(t,\delta)$, and write $N_R(t,\delta)$ as the optimal number of boxes needed to cover the range. 

\n If d $\geq1$, then we have $T_1 = T_{(\delta,\infty)}$ because $X_\delta \geq  \text{d}  \delta$. It follows that each time $N_{G}(t,\delta)$ increases by $1$, so does $N_{R}(t,\delta)$, and vice versa, so $N_{G}(t,\delta) = N_{R}(t,\delta)$, and  the box-counting dimension of the range and graph are equal when $\textup{d}\geq 1$.

\n For d $\in (0,1)$, a similar argument applies with a  covering of $\f{\d}{\textup{d}} \times \d$ rectangles rather than $\d \times \d$ squares. Starting with $[0, \f{\d}{\textup{d}} ] \times [0,\delta]$, at time $T_1$, add a new box $[T_1 , T_1 +\f{\d}{\textup{d}}] \times [X_{T_1} , X_{T_1} + \delta]$, and so on. The number of these boxes is again $N_{R}(\delta ,t)$, since $X_{\f{\d}{\textup{d}}} \geq \delta$.  By Remark \mbox{ \ref{freedom} }, this covering of rectangles can still be used to define the box-counting dimension of the range, since for $k := \l\lceil    \f{1}{\textup{d}} \r\rceil$, with $N_{G} (t,\delta)$ and $N_{G}^\prime (t,\delta)$ as  the number of squares and of rectangles respectively, \[ N_{G}^{\prime} (t,\delta) \leq N_{G} (t,\delta) \leq k \ N_{G}^{\prime}  (t, \delta / k ) .    \]

   \end{proof}

\begin{remark} \textup{ \label{bv} The box-counting dimension of the graph of every subordinator is 1 almost surely,  since subordinators have bounded variation (BV) almost surely. 
 The same is true for the graph of all BV functions/processes, including in particular every L\'evy process without a Gaussian component, whose L\'evy measure satisfies $\int (1 \wedge |x|)\Pi(dx)<\infty$. 
 By Proposition  \mbox{\ref{propn5.2}}, the  box-counting dimension of the range of every subordinator with drift  $\textup{d}>0$ is 1 almost surely.  }  \end{remark}






\newpage

\subsection{Special Cases: Regular Variation of the Laplace Exponent}   



\n Corollary \mbox{\ref{coroll2}}  is analogous to \cite[Corollary 2]{s14}, with $L(t,\delta)$ in place of $N(t,\delta)$. This allows very fine comparisons, not visible at the log-scale, to be made  between  subordinators whose Laplace exponents  are regularly varying with the same index.  

\begin{corollary} \label{coroll2} Consider a subordinator whose Laplace exponent is regularly varying at infinity, such that $\Phi(\lambda) \sim  \lambda^\alpha  F(\lambda)$ for $\alpha \in(0,1)$, where $F(\cdot)$ is a slowly varying function. Then   almost surely as $\d\to0$, for all $t>0$, \[  L(t,\delta)  \sim     \frac{t  \delta^{-\alpha} F\l(\frac{1}{\delta}\r)  }{\Gamma( 2-\alpha ) }       .\]   \end{corollary}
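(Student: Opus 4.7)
The plan is to exploit the almost sure convergence $L(t,\delta)/\mu(\delta)\to t$ provided by Theorem \ref{lslln} and thereby reduce the corollary to a purely deterministic asymptotic statement about $\mu(\delta)$. Specifically, it suffices to establish
\[ \mu(\delta) \sim \frac{\delta^{-\alpha} F(1/\delta)}{\Gamma(2-\alpha)} \qquad \text{as } \delta \to 0. \]
Before doing so, I would first observe that the hypothesis $\Phi(\lambda) \sim \lambda^\alpha F(\lambda)$ with $\alpha<1$ forces the drift to vanish (otherwise $\Phi(\lambda) \geq \textup{d}\lambda$ would contradict slow variation of $F$) and the L\'evy measure to be infinite (otherwise $\Phi$ would be bounded, contradicting $\Phi(\lambda)\to\infty$). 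Theorem \ref{lslln} therefore applies, and with $\textup{d}=0$ one has the simplification $\mu(\delta) = I(\delta)/\delta$.

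The main step is then an application of Karamata's Tauberian theorem. Using Fubini in the L\'evy--Khintchine representation $(\ref{lk})$ with $\textup{d}=0$ I would rewrite
\[ \frac{\Phi(\lambda)}{\lambda} = \int_0^\infty e^{-\lambda x}\, \overline{\Pi}(x)\, dx, \]
so that the hypothesis becomes an Abelian statement: the Laplace transform of the monotone (non-increasing) function $\overline{\Pi}$ is asymptotic to $\lambda^{-(1-\alpha)} F(\lambda)$ at infinity. By Karamata's Tauberian theorem for monotone integrands this is equivalent to $\overline{\Pi}(x) \sim x^{-\alpha} F(1/x)/\Gamma(1-\alpha)$ as $x \to 0^+$. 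Integrating this asymptotic via Karamata's theorem for regularly varying functions, and using the identity $(1-\alpha)\Gamma(1-\alpha) = \Gamma(2-\alpha)$, yields
\[ I(\delta) = \int_0^\delta \overline{\Pi}(x)\, dx \sim \frac{\delta^{1-\alpha} F(1/\delta)}{\Gamma(2-\alpha)}. \]
Dividing by $\delta$ gives the desired asymptotic for $\mu(\delta)$, and substituting into Theorem \ref{lslln} completes the proof.

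I do not expect any serious obstacle. The main point requiring care is the careful tracking of the slowly varying factor $F(1/\delta)$ through the two successive applications of Karamata-type results, so that the precise constant $1/\Gamma(2-\alpha)$ emerges rather than only the rougher $\asymp$-relation $\mu(\delta)\asymp U(\delta)^{-1}$ used elsewhere in the paper; the monotonicity hypothesis of the Tauberian theorem is immediate from $\overline{\Pi}$ being a tail, and no further probabilistic input beyond Theorem \ref{lslln} is needed.
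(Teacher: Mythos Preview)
Your proposal is correct and follows essentially the same route as the paper: reduce to the deterministic asymptotic for $\mu(\delta)=I(\delta)/\delta$ via Theorem~\ref{lslln} (noting $\textup{d}=0$), invoke the Tauberian relation $\overline{\Pi}(x)\sim\Phi(1/x)/\Gamma(1-\alpha)$, and integrate with Karamata's theorem to obtain the constant $1/\Gamma(2-\alpha)$. The only cosmetic difference is that the paper cites the asymptotic for $\overline{\Pi}$ directly from \cite[p75]{b98}, whereas you supply the Laplace-transform justification yourself.
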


\b{proof}[Proof of Corollary \mbox{\ref{coroll2}}]

  Note that $\textup{d}=0$, i.e.\ there is no drift, when the Laplace exponent is regularly varying of index $\alpha\in(0,1)$. By Theorem \mbox{\ref{lslln}}, as $\d\to0$,
   \[
   L(t,\d) \sim t \mu(\d) = \f{t I(\d)}{\d}  =   \f{t}{\d} \int_0^\d \overline{\Pi}(x)dx.
   \] Since $\Phi$ is regularly varying at $0$,   as $x\to0$, $\overline{\Pi}(x) \sim \Phi(\f{1}{x})/\Gamma(1-\alpha)$ (see \cite[p75]{b98}). Then by Karamata's Theorem  (see \cite[Prop. 1.5.8]{bgt89}),  almost surely as $\d\to0$,
   \[ 
    L(t,\d)   \sim    \f{ t \d^{-\alpha} F\l(  \f{1}{\d} \r)  }{ \Gamma(2-\alpha)   }.
   \]
 \e{proof}

\n  Corollary \mbox{\ref{coroll1}} strengthens the result of Theorem \mbox{\ref{asymp}} when the Laplace exponent $\Phi$ is regularly varying. The result can not be strengthened in general, as the relationship between $\mu(\delta)$ and $U(\delta)^{-1}$ is ``$\asymp$'' rather than ``$\sim$'' (see\mbox{\cite[Prop. 1.4]{b99}}).

\begin{corollary} \label{coroll1} For a subordinator with Laplace exponent $\Phi$ regularly varying at infinity with index $\alpha \in (0,1)$, for all $t>0$, almost surely as $\d\to0$,  \[ N(t,\delta)  \sim    \Gamma(2-\alpha) \Gamma(1+\alpha)   L(t,\delta)   
.  \] \end{corollary}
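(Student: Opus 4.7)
The plan is to combine Theorem \ref{nslln}, Corollary \ref{coroll2}, and the classical Tauberian estimate relating the renewal function $U(\delta)$ to the Laplace exponent $\Phi$ under regular variation.

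First, by Theorem \ref{nslln}, almost surely as $\delta\to 0$, $N(t,\delta) \sim t / U(\delta)$. Next, since $\Phi$ is regularly varying at infinity of index $\alpha\in(0,1)$, a standard Tauberian/Abelian argument for subordinators (see \cite[p74]{b98}) yields
\[
U(\delta) \;\sim\; \frac{1}{\Gamma(1+\alpha)\,\Phi(1/\delta)} \qquad \text{as } \delta \to 0.
\]
Combining these two asymptotics with the hypothesis $\Phi(\lambda)\sim \lambda^\alpha F(\lambda)$, we obtain
\[
N(t,\delta) \;\sim\; t\,\Gamma(1+\alpha)\,\Phi(1/\delta) \;\sim\; t\,\Gamma(1+\alpha)\,\delta^{-\alpha} F(1/\delta)
\]
almost surely as $\delta \to 0$.

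Now from Corollary \ref{coroll2}, almost surely as $\delta \to 0$,
\[
L(t,\delta) \;\sim\; \frac{t\,\delta^{-\alpha} F(1/\delta)}{\Gamma(2-\alpha)}.
\]
Dividing the two asymptotic expressions gives
\[
\frac{N(t,\delta)}{L(t,\delta)} \;\longrightarrow\; \Gamma(2-\alpha)\,\Gamma(1+\alpha)
\]
almost surely, which is precisely the claimed equivalence $N(t,\delta) \sim \Gamma(2-\alpha)\Gamma(1+\alpha)\,L(t,\delta)$.

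There is no real obstacle: the entire content of the proof is already contained in the almost sure statements of Theorems \ref{nslln} and \ref{lslln} (the latter via Corollary \ref{coroll2}), together with the Tauberian estimate $U(\delta)\Phi(1/\delta)\to 1/\Gamma(1+\alpha)$. The only point requiring a moment's care is to ensure that the almost sure convergence survives the ratio, but since both $N(t,\delta)$ and $L(t,\delta)$ converge almost surely (after the same deterministic normalisation) to the same pathwise limit $t$ multiplied by constants, the ratio converges almost surely on the common event of full measure.
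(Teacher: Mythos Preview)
Your proof is correct and takes essentially the same approach as the paper. The only cosmetic difference is that the paper cites \cite[Corollary 2]{s14} directly for the asymptotic $N(t,\delta)\sim \Gamma(1+\alpha)\,t\,\delta^{-\alpha}F(1/\delta)$, whereas you rederive this from Theorem~\ref{nslln} together with the Tauberian estimate $U(\delta)\sim 1/(\Gamma(1+\alpha)\Phi(1/\delta))$; the rest of the argument (combining with Corollary~\ref{coroll2} and taking the ratio) is identical.
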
 

\n Corollary \mbox{\ref{coroll1}}  follows immediately from Corollary \mbox{\ref{coroll2}} and \cite[Corollary 2]{s14}, which says that when the Laplace exponent $\Phi$ is regularly varying at infinity, such that $\Phi(\lambda) \sim  \lambda^\alpha  F(\lambda)$ for $\alpha \in(0,1)$, where $F(\cdot)$ is a slowly varying function, for all $t>0$, almost surely as $\d\to0$, \[N(t,\d)\sim    \Gamma(1+\alpha) t \d^{-\alpha} F\l( \f{ 1}{\d} \r) .   \]




\vspace{0.1cm}
\vspace{0.1cm}
\begin{remark} For $\alpha \in (0,1)$,   $ \Gamma(2-\alpha) \Gamma(1+\alpha) $  takes values between $\pi /4$ and  $1$. So $L(t,\delta)$ and $N(t,\delta)$  are closely related when the Laplace exponent is regularly varying, but as $\d\to0$, $L(t,\delta)$ grows to infinity slightly faster than $N(t,\delta)$.  \end{remark}

\vspace{0.1cm}

\vspace{0.1cm}

\vspace{0.1cm}
\vspace{0.1cm}
\vspace{0.1cm}
\subsection{Acknowledgements} Many thanks to Mladen Savov for guiding the author towards this interesting topic, and for numerous helpful discussions related to  this work. Further thanks to Ron Doney for his feedback on an early draft of this paper, and thanks to an anonymous referee for their valuable comments on this work.



 
 
\bibliography{boxdim.bib}
\bibliographystyle{plain}

\end{document}